\documentclass[12pt]{article}
\usepackage[utf8]{inputenc}
\usepackage{amsmath}
\usepackage{physics}
\usepackage{graphicx}
\usepackage{mathtools}
\usepackage{amssymb}
\usepackage{bm}

\usepackage{amsthm}
\usepackage{float}
\usepackage{hyperref}

\usepackage{tikz}
\textheight 9in
\setlength\textwidth{6.9in}
\topmargin -0.4in
\setlength\oddsidemargin{-0.2in}
\setlength\evensidemargin{0in}
\setlength\parskip{0.0in} 

\newcommand{\NN}{\mathbb{N}}

\newcommand{\ZZ}{\mathbb{Z}}

\newcommand{\Pc}{\mathcal{P}}

\newcommand{\Fc}{\mathcal{F}}

\theoremstyle{definition}
\newtheorem{theorem}{Theorem}

\newtheorem{definition}[theorem]{Definition}
\newtheorem{lemma}[theorem]{Lemma}
\newtheorem{remark}[theorem]{Remark}

\newtheorem{example}[theorem]{Example}

\tikzset{every picture/.style={line width=0.75pt}} 

\title{Extremal Uniquely Resolvable Multisets}
\author{Varun Sivashankar\\ 
\href{mailto:varunsiva@ucla.edu}{\texttt{varunsiva@ucla.edu}}\\
\normalsize UCLA Mathematics}
\date{}
\begin{document}

\maketitle

\begin{abstract}
For positive integers $n$ and $m$, consider a multiset of non-empty subsets of $[m]$ such that there is a \textit{unique} partition of these subsets into $n$ partitions of $[m]$. We study the maximum possible size $g(n,m)$ of such a multiset. We focus on the regime $n \leq 2^{m-1}-1$ and show that $g(n,m) \geq \Omega(\frac{nm}{\log_2 n})$. When $n = 2^{cm}$ for any $c \in (0,1)$, this lower bound simplifies to $\Omega(\frac{n}{c})$, and we show a matching upper bound $g(n,m) \leq O(\frac{n}{c}\log_2(\frac{1}{c}))$ that is optimal up to a factor of $\log_2(\frac{1}{c})$. We also compute $g(n,m)$ exactly when $n \geq 2^{m-1} - O(2^{\frac{m}{2}})$.

\end{abstract}

\section{Introduction}

In this paper, we study the maximum size of a multiset that admits a certain kind of unique partition. This problem was originally motivated by attempting to count the minimum number of rules required to guarantee the existence of a unique solution for a logic based problem called the zebra puzzle.

Fix $n,m \in \ZZ^+$. Let $\Fc = \{C_1,\ldots,C_k\}$ be a multiset consisting of non-empty subsets of $[m] = \{1,\ldots,m\}$. Suppose there exists a \textit{unique} partition $\mathcal{P} = \{A_1,\ldots,A_n\}$ of $[k]$ such that for each $i \in [n]$, $\sqcup_{j \in A_i} C_j = [m]$ is a disjoint partition of $[m]$.

\[\textbf{Question: } \textit{What is the size $g(n,m)$ of the largest such multiset?}\]

This question is related to the study of block designs. We refer the reader to \cite{block} for a thorough treatment. Here, we include some basic definitions. Let $X$ be a finite set. A block design $(X,\Fc)$ on $X$ is a collection $\mathcal{F}$ of subsets of $X$. $(X,\Fc)$ is said to be $t$-\textit{balanced} if every $t$-subset of $X$ is a subset of exactly $\lambda$ blocks in $\mathcal{F}$. Block designs are almost always assumed to be \textit{uniform}, which means that every set in $\Fc$ must have the same cardinality. However, \textit{non-uniform} block designs are also studied. For example, pairwise balanced designs studied in \cite{pbd} are block designs with $t=2$ and non-uniform block sizes. Lastly, a block design is called \textit{resolvable} if we can partition $\Fc$ into classes such that the sets in each class form a partition of $X$ \cite{block}.

Let $X = [m]$ and $\Fc$ a collection of non-empty subsets of $[m]$. Our question is related to 1-balanced block designs, where every element $i \in [m]$ appears in $\lambda = n$ subsets of $[m]$. Further, we require $(X,\Fc)$ to be \textit{uniquely} resolvable. As we will see later, in our regime of $n \leq 2^{m-1}-1$, we need not include $[m]$ in $\Fc$, and no other set can be repeated without violating unique resolvability. To summarize, for parameters $n,m \in \NN$, our problem can be viewed as determining the largest possible size of a non-uniform 1-balanced block design on $[m]$ with $\lambda = n$ that is uniquely resolvable. 

However, since prior work has focused on uniform designs, we will merely borrow some terms such as resolvability for convenience, as opposed to viewing our problem as a question in block design. 

The zebra puzzle is often used as a benchmark in testing algorithms for constraint satisfaction problems. The problem states that there are 5 people, each with a distinct attributes like nationality, color of house, preferred drink, preferred cigarette brand and pet. Examples of constraints include `The Englishman lives in the Red house', `Coffee is drunk in the Green house' and `The Spaniard owns the dog'. The puzzle then asks for the identity of the person with a zebra as a pet \cite{wiki}. Given enough rules, it is possible to determine exactly the attributes of every person. Motivated by this problem, we consider the minimum number of rules required to uniquely specify a solution for an arbitrary instance of the problem with $n$ people and $m$ attributes. In our formulation, we only permit rules that can be expressed as $2$-tuples, such as (Coffee, Green House) or (Spaniard, Dog) and do not permit compound rules. The minimum number of rules required in this formulation is $nm - g(n,m)$, where $g(n,m)$ is the size of the multiset discussed above. We provide a small example below to illustrate the correspondence.

\begin{figure}[H]
    \centering
    \includegraphics[scale=0.7]{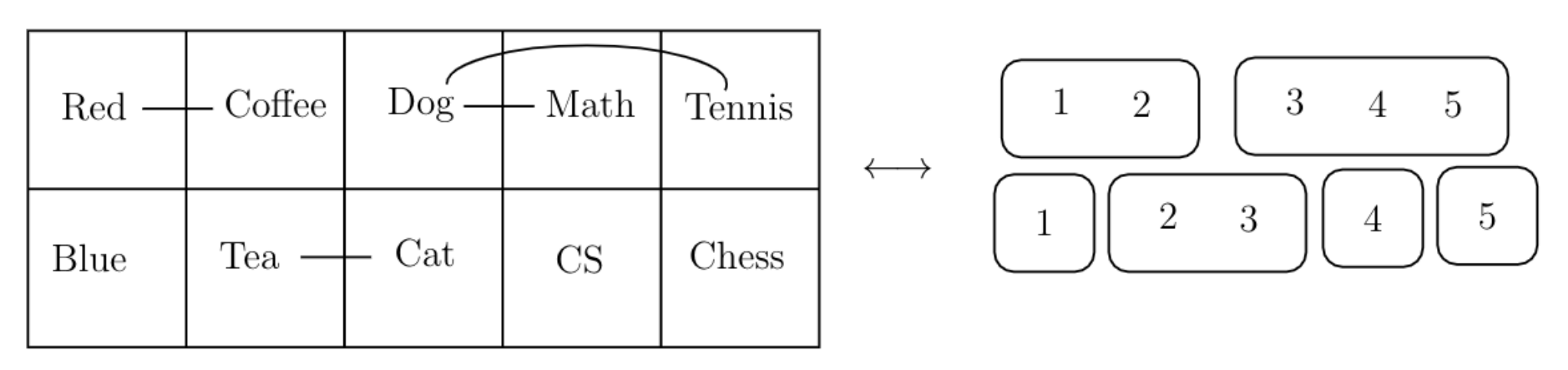}
    \caption{This corresponds to the case $(n,m) = (2,5)$. The rules of the puzzle here are (Red,Coffee), (Dog,Math), (Tennis,Dog) and (Tea,Cat). We number the attributes Color, Drink, Pet, Subject, Sport from 1 to 5 and create the corresponding multiset $\Fc = \{\{1,2\},\{3,4,5\},\{1\},\{2,3\},\{4\},\{5\}\}$. Given these rules, we know that one person must have the attributes (Red,Coffee,Dog,Math,Tennis) and the other person must have (Blue,Tea,Cat,CS,Chess). If we have an optimal number of rules that guarantees a unique solution to the puzzle, in any row with $k$ rules, we will have $m-k$ sets in the corresponding multiset diagram. So if $g(n,m)$ maximizes the number of sets that can be uniquely partitioned, it is easy to see why the optimal number of rules must be $nm-g(n,m)$.}
    \label{fig1}
\end{figure}

While this puzzle provides some motivation, the question itself is a very natural extremal combinatorics problem. The rest of this paper will not assume any knowledge about the puzzle. 

\subsection{Main Results}

We first prove a simple result that $g(2,m) = m+1$ (Theorem 12) and introduce the Subset Criterion (Lemma 13), a useful tool for further analysis. The subset criterion reveals that for $n > 2^{m-1} - 1$, any extremal multiset must have at least $n-2^{m-1}+1$ copies of the whole set $[m]$ to maintain uniqueness, which is not particularly exciting. We focus on the more interesting regime of $n \leq 2^{m-1} - 1$, for which we establish the following results.

\begin{enumerate}
\item[(i)] \textbf{Theorem 23:} Suppose $n \leq 2^{m-1}-1$. Then
$g(n,m) > \frac{n(m+1)}{\log_2(n+1)+2} = \Omega\left(\frac{nm}{\log_2 n}\right)$.
When $n = 2^{cm}$ for any $c \in (0,1)$, this simplifies to 
$g(n,m) \geq \Omega\left(\frac{n}{c}\right)$.
\item[(ii)] \textbf{Theorem 27:} Suppose $n = 2^{cm} \leq 2^{m-1}-1$ for any $c \in (0,1)$. Then
$g(n,m) \leq \frac{n}{c} \left(6 - 3.2\log_2(c)\right) = O\left(\frac{n}{c}\log_2\left(\frac{1}{c}\right)\right)$. So the upper bound matches the lower bound up to a $\log_2(\frac{1}{c})$ factor.
\item[(iii)] \textbf{Theorem 29:} Suppose $2^{m-1}+1-2^{\lfloor\frac{m}{2}\rfloor}\leq n \leq 2^{m-1} - 1$. Then $g(n,m) = 2n + \lfloor \frac{2^{m-1}-1-n}{2} \rfloor$.
\end{enumerate}

\section{Preliminaries}
\begin{definition}[Resolvable Multiset]
Fix $n,m \in \ZZ^+$. $\mathcal{F}$ is a resolvable multiset if it is a multiset of non-empty proper subsets of $[m]$, say $\mathcal{F} = \{C_1,\ldots,C_k\}$, such that there exists a partition $\mathcal{P} = \{A_1,\ldots,A_n\}$ of $[k]$ with the property that $\sqcup_{j \in A_i} C_j = [m]$ is a partition of $[m]$ for all $i=1,\ldots,n$.
\end{definition}

\begin{definition}[Reassignment of a Partition]
Fix $n,m \in \ZZ^+$. Let $\Fc = \{C_1,\ldots,C_k\}$ be a resolvable multiset with partitions. If $\Pc$ and $\Pc'$ are distinct partitions, not just formed by a trivial renaming of the partition classes, then call $\Pc'$ a reassignment of $\Pc$.
\end{definition}

\begin{example}
In Figure 3, $\Pc'$ is a reassignment of $\Pc$.
\end{example}

\begin{definition}[Uniquely Resolvable Multiset]
A resolvable multiset is uniquely resolvable if the partition $\Pc = \{A_1,\ldots,A_n\}$ corresponding to it is unique, up to a trivial renaming of the partition classes. For any $n,m \in \ZZ^+$, a uniquely resolvable multiset always exists. Namely, the multiset $\Fc$ containing $n$ copies of $[m]$ is uniquely resolvable.
\end{definition}

\begin{example}
Below are two examples of $\Fc$ that are uniquely resolvable and not uniquely resolvable respectively.
\begin{figure}[H]
    \centering
    \includegraphics[scale=0.3]{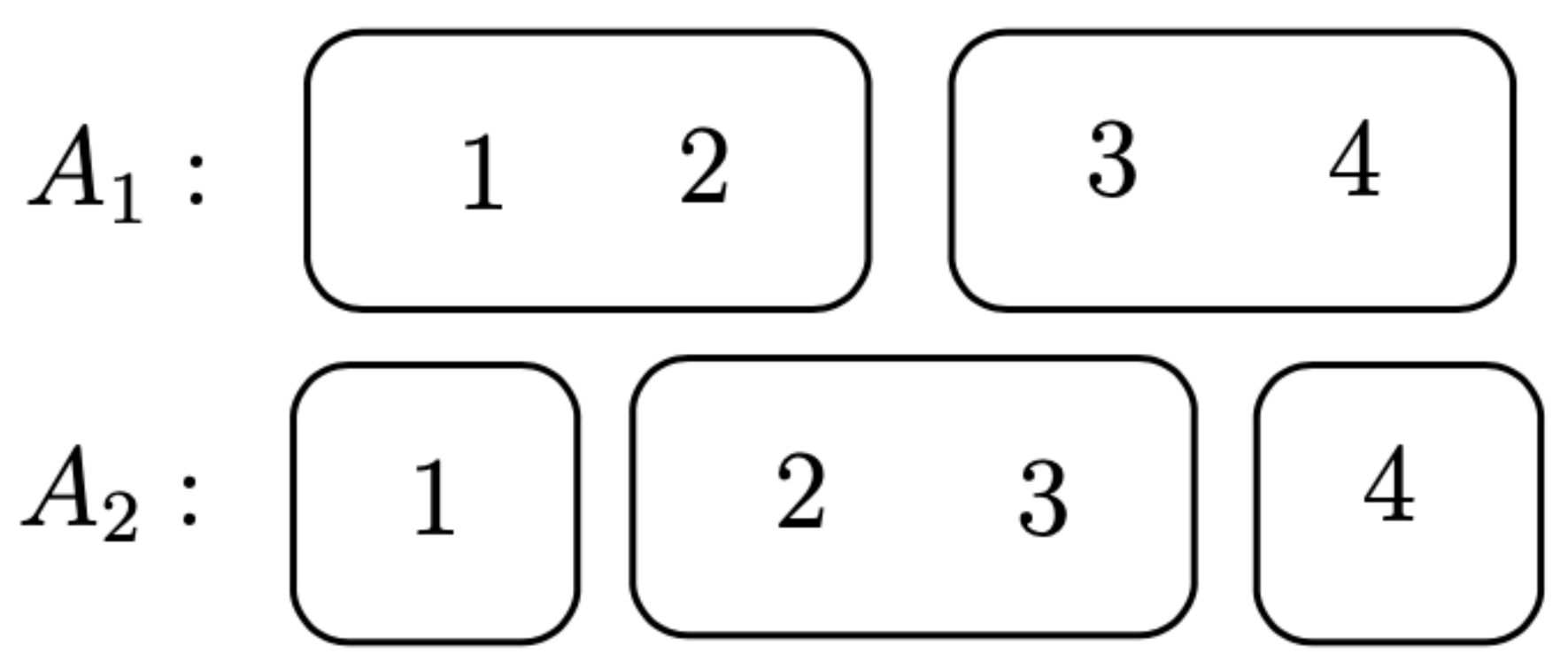}
    \caption{This corresponds to the case $(2,4)$. $\Fc$ contains the 5 sets $C_1 = \{1,2\}, C_2 = \{3,4\}, C_3 = \{1\}, C_4 = \{2,3\}$ and $C_5 = \{4\}$. It is clear that there is only one possible partition $\Pc = \{A_1,A_2\}$ here with $A_1 = \{1,2\}$ and $A_2 = \{3,4,5\}$. Therefore, this is a uniquely resolvable multiset. In fact, it also turns out to be extremal.}
    \label{fig2}
\end{figure}

\begin{figure}[H]
    \centering
    \includegraphics[scale=0.4]{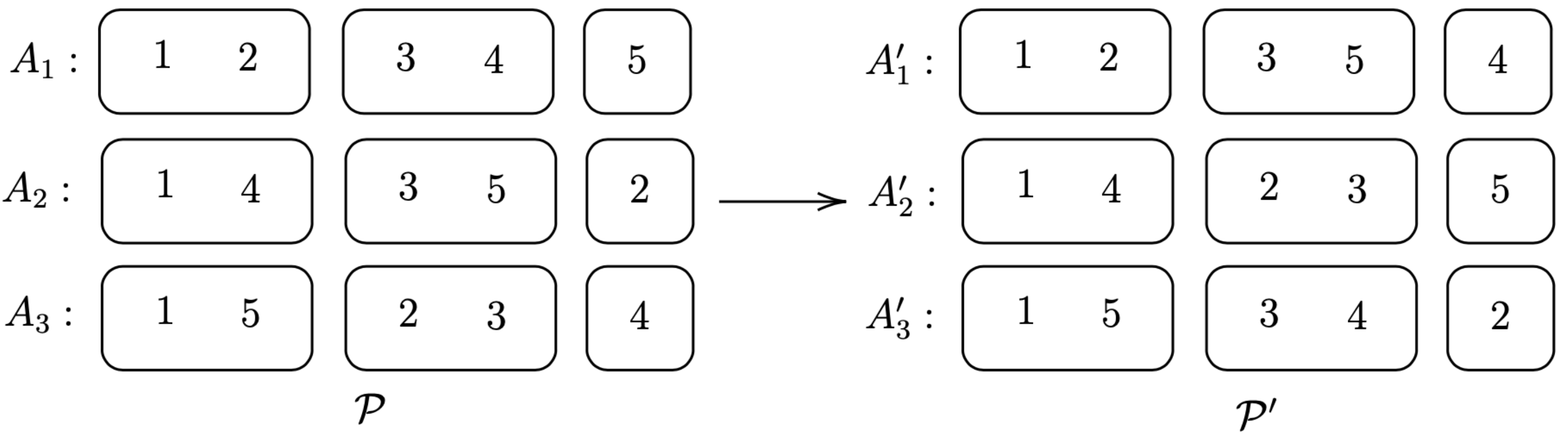}
    \caption{This corresponds to the case $(3,5)$. $\Fc$ contains the 9 sets $C_1 = \{1,2\}, C_2 = \{3,4\}, C_3 = \{5\}, C_4 = \{1,4\}, C_5 = \{3,5\}, C_6 = \{2\}, C_7 = \{1,5\}, C_8 = \{2,3\}$ and $C_9 = \{4\}$. There are two possible partitions. $\Pc = \{A_1,A_2,A_3\}$ with $A_1 = \{1,2,3\}, A_2 = \{4,5,6\}$ and $A_3 = \{7,8,9\}$ is a valid partition, but so is $\Pc' = \{A_1',A_2',A_3'\}$ with $A_1' = \{1,5,9\}, A_2' = \{4,8,3\}$ and $A_3' = \{7,2,6\}$. Thus, $\Fc$ is resolvable but not uniquely so.}
    \label{fig3}
\end{figure}
\end{example}

\begin{definition}[Extremal Uniquely Resolvable Multiset]
Fix $n,m \in \ZZ^+$. A uniquely resolvable multiset $\Fc$ is called extremal if for all uniquely resolvable multisets $\Fc'$, we have $|\Fc'| \leq |\Fc|$. For brevity, we will refer to these as extremal multisets.
\end{definition}

While a resolvable multiset is allowed to contain repetitions, the only set that can repeat without violating the existence of a unique partition is $[m]$ itself. We will later see that when $n \leq 2^{m-1}-1$, we can always remove $[m]$ from any extremal uniquely resolvable multiset and preserve its size (Lemma 17). So we can always find extremal uniquely resolvable multisets containing distinct non-empty proper subsets.

\begin{definition}[Classes of a Partition]
Let $\Pc = \{A_1,\ldots,A_n\}$ be a partition of $\Fc$. Then $\{C_j : j \in A_i\}$ is a class of $\Pc$ corresponding to $A_i$ for each $i$, and the $C_j$ are the components of $A_i$. We will abuse notation to say that $A_i$ itself is a class corresponding to $\Pc$. 
\end{definition}

\begin{definition}[Class Size]
Let $\Pc = \{A_1,\ldots,A_n\}$ be a partition of $\Fc$. Then $d_i = |A_i| \geq 1$ is the size of the class $A_i$ for each $i$.
\end{definition}
\begin{example}
In Figure 2, the class $A_1$ has size 2 (its 2 components are $\{1,2\}$ and $\{3,4\}$) and the class $A_2$ has size $3$ (its 3 components are $\{1\},\{2,3\}$ and $\{4\}$).
\end{example}

\begin{definition}[Proper subsets induced by a Class]
Let $\Fc = \{C_1,\ldots,C_k\}$ be a multiset and let $\Pc = \{A_1,\ldots,A_n\}$ be a partition of $\Fc$. Fix a class $A_i$. The proper subsets induced by $A_i$ are all the non-empty proper subsets of $[m]$ that can be formed by taking unions over some components in $A_i$. A size $d$ class induces $2^d-2$ proper subsets.
\end{definition}

\begin{example}
In Figure 2, $A_2$ induces the proper subsets $\{1\}$, $\{2,3\}$, $\{4\}$, $\{1,2,3\},\{1,4\}$ and $\{2,3,4\}$. 
\end{example}

\section{Determining $g(n,m)$}

\subsection{Introductory Results}

\begin{theorem} 
Suppose $n = 2$. Then for all $m \in \ZZ^+$,
\begin{equation}
g(2,m) = m+1
\end{equation}
\end{theorem}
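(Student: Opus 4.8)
The plan is to prove the equality $g(2,m) = m+1$ by establishing a matching lower bound and upper bound. The lower bound requires exhibiting a single uniquely resolvable multiset of size $m+1$ for $n=2$, while the upper bound requires showing no uniquely resolvable multiset for $n=2$ can exceed size $m+1$.

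For the lower bound, I would construct an explicit example. The most natural candidate is the multiset $\Fc = \{[m], \{1\}, \{2\}, \ldots, \{m\}\}$ consisting of the whole set together with all $m$ singletons — this has size $m+1$. The unique partition into two classes must place $[m]$ in one class by itself (since $[m]$ together with any non-empty set would overflow), and the $m$ singletons in the other class, which tile $[m]$ exactly. I would argue uniqueness by noting that $[m]$ must be alone in its class, forcing everything else into the second class. Alternatively, mimicking the "staircase" construction suggested by Figure 2 (for the analogous $(2,4)$ case) would also work; I would pick whichever admits the cleanest uniqueness argument.

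For the upper bound, the key observation is a counting/parity argument. For $n=2$ we have exactly two classes, say $A_1$ and $A_2$, each a partition of $[m]$. If the multiset has $k$ sets with class sizes $d_1 + d_2 = k$, I would derive a constraint by counting: since each class must cover all $m$ elements disjointly, and summing set sizes in each class gives $m$. To bound $k$, I expect to argue that if $k$ were $m+2$ or larger, then the two classes would admit an alternative reassignment, contradicting uniqueness — essentially there would be enough freedom to swap components between the two partitions while preserving the tiling property. This is likely where a pigeonhole or a small reassignment-exchange argument is needed.

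\textbf{The hard part} will be the uniqueness direction of the upper bound: showing that size $\geq m+2$ forces a reassignment. The cleanest route is probably to suppose $|\Fc| = k$ with partition $\{A_1, A_2\}$ and show that the total number of set-element incidences is $2m$ (each element covered once per class), so that having many sets forces many singletons or small sets, which in turn create swappable pairs across the two classes. I anticipate that if both classes contain a common induced proper subset $S$ (equivalently, a sub-collection in $A_1$ and a sub-collection in $A_2$ each unioning to the same $S$), one can exchange them to produce a genuine reassignment; controlling when this is forced to happen for $k \geq m+2$ is the crux. I would handle the extremal case $k = m+1$ separately to confirm it is exactly achievable without triggering such an exchange.
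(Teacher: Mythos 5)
Your lower bound is correct and is exactly the paper's construction: the multiset consisting of $[m]$ together with all $m$ singletons, with the unique partition putting $[m]$ alone in one class and the singletons in the other.

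The upper bound, however, has a genuine gap: you correctly identify that the crux is to show $k \geq m+2$ forces a common induced proper subset (a sub-collection of one class and a sub-collection of the other with the same union), which could then be exchanged to violate uniqueness — but you do not supply the mechanism that forces this, and you explicitly flag it as unresolved. The mechanisms you float do not close it. The incidence count only gives $2m$ element--set incidences and hence $k \leq 2m$; and the Subset Criterion (the ``common induced subset'' inequality $\sum_i (2^{d_i}-2) \leq 2^m-2$) with $d_1+d_2=k$ only yields $k \leq 2m-2$, which is weaker than $m+1$ for $m \geq 4$. ``Many small sets'' do not by themselves create swappable pairs. The missing idea, which is the heart of the paper's proof, is to consider the graph on the $k$ sets with an edge between two sets whenever they intersect. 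Since each element of $[m]$ lies in exactly two sets (once per class), this graph has at most $m$ edges; and it must be \emph{connected}, because the sets in any proper connected component would be closed under the two tilings and could be swapped wholesale between the two classes, contradicting uniqueness. Connectivity on $k$ vertices with at most $m$ edges gives $k-1 \leq m$, i.e., $k \leq m+1$. (Equivalently: the join of the two partitions of $[m]$ must be the one-block partition, else a block of the join is a common induced proper subset.) Without this connectivity-plus-edge-count step, your outline does not reach the bound $m+1$.
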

\begin{proof}
Fix $n=2$. Let $\Fc = \{C_1,\ldots,C_m,C_{m+1}\}$ with $C_i = \{i\}$ for $i \in [m]$ and $C_{m+1} = [m]$. Clearly, there is only one valid partition $\Pc = \{[m],\{m+1\}\}$. Therefore, $g(2,m) \geq m+1$. Conversely, let $\Fc = \{C_1,\ldots,C_k\}$ be an extremal multiset, so $k = g(n,m)$. We will show that $k \leq m+1$.

Consider the graph $G$ whose vertices are $C_1,\ldots,C_k$, where $\{C_i,C_j\}$ for $i \neq j$ is an edge iff $C_i \cap C_j \neq \varnothing$. Since each number $i \in [m]$ appears exactly twice in the sets $C_1,\ldots,C_k$, $G$ has at most $m$ edges. We claim that $G$ must be connected, so $k \leq m+1$ as desired. 

Suppose by contradiction that $G$ has a proper connected component with vertex set $\{C_{i_1},\ldots,C_{i_l}\}$. Since $\Fc$ is resolvable into 2 classes, each element of $C_{i_1} \cup \ldots \cup C_{i_l}$ is in exactly two sets in $\{C_{i_1},\ldots,C_{i_l}\}$ and in no other set in $\{C_1,\ldots,C_k\} \setminus \{C_{i_1},\ldots,C_{i_l}\}$. Thus we can exchange the sets in $\{C_{i_1},\ldots,C_{i_l}\}$ that occur in one of the two partition classes with those in the other, contradicting the uniqueness of the partition.
\end{proof}

The result below provides a necessary condition for $\Pc$ to be the partition of a uniquely resolvable multiset $\Fc$, but it is not a sufficient condition.

\begin{lemma}[Subset Criterion]
Suppose $\Fc$ is a uniquely resolvable multiset with respect to $n,m \in \ZZ^+$. Let $\Pc$ be the unique partition of $\Fc$. Let $d_1,\ldots,d_n$ be the sizes of the classes in $\Pc$. Then we have that
\begin{equation}
\sum_{i=1}^n (2^{d_i} - 2) \leq 2^m - 2
\end{equation}
\end{lemma}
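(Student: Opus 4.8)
The plan is to prove the inequality by a counting argument over the non-empty proper subsets of $[m]$, of which there are exactly $2^m-2$. Each class $A_i$ of $\Pc$ induces a family of non-empty proper subsets (those obtained as unions of subcollections of its components), and by the definition of induced subsets this family has size $2^{d_i}-2$. The goal is to show these $n$ families are pairwise disjoint and internally distinct, so that their disjoint union is a subfamily of the $2^m-2$ non-empty proper subsets of $[m]$; summing gives $\sum_{i=1}^n(2^{d_i}-2)\le 2^m-2$. I would first record the easy internal-distinctness fact: the $d_i$ components of $A_i$ are disjoint non-empty sets whose union is $[m]$, so distinct subcollections have distinct unions, and discarding the empty subcollection and the full subcollection (which give $\es$ and $[m]$) leaves exactly $2^{d_i}-2$ distinct non-empty proper subsets.

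The crux of the argument, and the step I expect to be the main obstacle, is showing that no non-empty proper subset $S\subsetneq[m]$ is induced by two different classes. Suppose for contradiction that $S$ is induced both by $A_i$, via some $\es\neq B\subsetneq A_i$ with $\sqcup_{j\in B}C_j=S$, and by $A_{i'}$ with $i'\neq i$, via some $\es\neq B'\subsetneq A_{i'}$ with $\sqcup_{j\in B'}C_j=S$; note $B\subsetneq A_i$ and $B'\subsetneq A_{i'}$ strictly because $S\neq[m]$. I would then construct a reassignment $\Pc'$ of $\Pc$ by replacing the classes $A_i,A_{i'}$ with $(A_i\sm B)\cup B'$ and $(A_{i'}\sm B')\cup B$, leaving all other classes unchanged. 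The checks are: (a) $\Pc'$ is again a partition of $[k]$ into $n$ parts, since $B,A_i\sm B$ partition $A_i$ and $B',A_{i'}\sm B'$ partition $A_{i'}$, and the index sets $A_i,A_{i'}$ are disjoint; (b) each new class still resolves $[m]$, because $\sqcup_{j\in A_i\sm B}C_j=[m]\sm S$ and $\sqcup_{j\in B'}C_j=S$ give the disjoint partition $([m]\sm S)\sqcup S=[m]$, and symmetrically for the other new class; and (c) $\Pc'$ is genuinely distinct from $\Pc$ and not a mere renaming. For (c) I would observe that the new class $(A_i\sm B)\cup B'$ meets $A_i$ (in the non-empty $A_i\sm B$) and meets $A_{i'}$ (in the non-empty $B'$), so as a set of indices it equals none of $A_i$, $A_{i'}$, or any $A_l$ with $l\neq i,i'$ (each of the latter being disjoint from both $A_i$ and $A_{i'}$); hence $\Pc'$ contains a class absent from $\Pc$, so $\Pc'$ is a reassignment. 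This contradicts the unique resolvability of $\Fc$.

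Finally I would combine the two facts. By the internal-distinctness observation each class contributes $2^{d_i}-2$ distinct non-empty proper subsets, and by the no-repeat argument these contributions are pairwise disjoint across classes; every induced set is a non-empty proper subset of $[m]$. Therefore $\sum_{i=1}^n(2^{d_i}-2)\le 2^m-2$, as claimed. Classes of size $d_i=1$ induce no proper subsets and contribute $2^1-2=0$, so they require no separate treatment.
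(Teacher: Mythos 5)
Your proposal is correct and follows essentially the same route as the paper: count the non-empty proper subsets induced by each class, and rule out a common induced subset between two classes by swapping the corresponding subcollections of components to produce a reassignment, contradicting uniqueness. You spell out the verification (internal distinctness, that the swapped classes still resolve $[m]$, and that the new partition is not a renaming) more carefully than the paper does, but the underlying argument is identical.
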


\begin{proof}
We will count the number of proper subsets of $[m]$ formed by each class. For any class $i$, there are $d_i \geq 1$ components. Therefore, by combining components, we can produce $2^{d_i} - 2$ proper subsets of $[m]$. Suppose the same proper subset of $[m]$ can be formed using only components of class $i$ and also using only components of class $j$ (with $i \neq j$). Then we can move the components forming that proper subset of $[m]$ in class $i$ to class $j$ and vice versa. This contradicts the fact that $\Pc$ is a unique partition of $\Fc$. So each class must contribute $2^{d_i} - 2$ new proper subsets. The total number of such subsets is given by $\sum_{i=1}^n (2^{d_i} - 2)$. This must be less than or equal to $2^m - 2$, the number of proper subsets of $[m]$, because otherwise some two classes must have at least one common proper subset by the pigeonhole principle.
\end{proof}

\begin{remark}[Choice of Regime]
Suppose $n > 2^{m-1} - 1$. Observe that $d_i \geq 2$ for at most $2^{m-1} - 1$ values of $i \in [n]$. This means $d_i = 1$ for at least $n - 2^{m-1} + 1$ values of $i$, so any uniquely resolvable multiset corresponding to $(n, m)$ must be formed from one corresponding to $(2^{m-1}-1,m)$ by adding $n - 2^{m-1} + 1$ copies of the entire set $[m]$. Therefore, when $n > 2^{m-1}-1$, we trivially have $g(n,m) = g(2^{m-1},m) + n - 2^{m-1} + 1$. We thus focus on the regime $n \leq 2^{m-1} - 1$, which has a lot more interesting structure.
\end{remark}

\subsection{Lower Bounds for $g(n,m)$ when $n \leq 2^{m-1} - 1$}
The focus of this section is to show a lower bound for $g(n,m)$. In the next section, we will provide close upper bounds in a large regime.

\begin{lemma}
Suppose $n \leq 2^{m-1}-1$. Then we have that
\begin{equation}
g(n,m) \geq 2n
\end{equation}
\end{lemma}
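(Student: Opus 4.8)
The plan is to exhibit an explicit uniquely resolvable multiset consisting of exactly $n$ classes, each of which is a partition of $[m]$ into two parts; since such a multiset has size $2n$, this establishes the bound. The guiding intuition is that every class of a uniquely resolvable multiset in this regime must have size at least $2$, because a single proper subset can never equal $[m]$. Thus $2n$ is the smallest conceivable size for a multiset with $n$ classes, and the only real task is to produce one such multiset whose resolution is forced to be unique.

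To build it, I would single out the element $m \in [m]$ as a marker. Choose any $n$ distinct nonempty proper subsets $P_1,\dots,P_n$ of $[m]$ that all contain $m$; this is possible precisely because there are $2^{m-1}-1 \ge n$ such subsets (they are the sets $\{m\}\cup Q$ for $Q \subsetneq [m-1]$). For each $i$ set $R_i = [m]\setminus P_i$, which is nonempty since $P_i \neq [m]$ and which does not contain $m$. Let $\Fc = \{P_1,R_1,\dots,P_n,R_n\}$. The $2n$ listed sets are genuinely distinct, since the $P_i$ all contain $m$ and are pairwise distinct, the $R_i$ all avoid $m$ and are pairwise distinct (complementation is a bijection), and no $P_i$ equals any $R_j$. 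The partition $\Pc = \{\,\{P_i,R_i\} : i \in [n]\,\}$ resolves $\Fc$, so $\Fc$ is resolvable and $|\Fc| = 2n$.

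The heart of the argument, and the step requiring the most care, is showing this resolution is unique. Here the marker $m$ does all the work. In any valid resolution, every class is a partition of $[m]$, so $m$ is covered exactly once within each class; since $m$ lies only in the sets $P_1,\dots,P_n$, each class contains exactly one $P_i$. With $n$ classes and exactly $n$ such sets, a pigeonhole count forces each class to contain exactly one of them, and these are distinct across classes. Once $P_i$ covers $m$, the remaining members of its class must partition $[m]\setminus P_i = R_i$ using only sets avoiding $m$, i.e.\ sets drawn from $R_1,\dots,R_n$; a second pigeonhole count ($n$ sets $R_j$ distributed among $n$ classes, each class needing at least one because $P_i$ is a proper subset) forces exactly one $R_j$ per class. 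Finally, $\{P_i,R_j\}$ is a partition of $[m]$ only if $R_j = [m]\setminus P_i = R_i$, and distinctness of the $R_j$ then forces $j=i$. Hence every resolution coincides with $\Pc$, so $\Fc$ is uniquely resolvable and $g(n,m)\ge |\Fc| = 2n$.

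I do not expect a serious obstacle here, provided the distinguished-element construction is used: the potential danger in a lower-bound proof of this type is a reassignment that merges or splits classes rather than permuting them, but the marker $m$ rules this out immediately by pinning exactly one marker-set and (via the complementary count) exactly one non-marker set to each class. The only points demanding attention are verifying that both pigeonhole steps are tight, which rests on each $P_i$ being a proper nonempty subset, and that distinctness of the $R_i$ upgrades the forced bijection to the identity.
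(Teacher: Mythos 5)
Your proposal is correct and takes essentially the same approach as the paper: both construct $n$ distinct complementary pairs of proper subsets of $[m]$ (your parameterization by which half contains the marker $m$ is just a way of enumerating these pairs) and then show any resolution must pair each set with its complement. The only cosmetic difference is in how class sizes are pinned to $2$ --- you use a double pigeonhole on the marker element, while the paper notes that a larger class would force some other class to consist of the single set $[m]$, which is not in $\Fc$.
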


\begin{proof}
We will construct a uniquely resolvable multiset $\Fc$ with each class in the partition $\Pc$ having size $2$. Specify each class $A_i$ in $\Pc$ (for each $i \in [n]$) by a pair of disjoint proper subsets $\{C_1^{(i)},C_2^{(i)}\}$ of $[m]$ such that $C_1^{(i)} \cup C_2^{(i)} = [m]$. There are $\frac{2^m - 2}{2} = 2^{m-1}-1$ such pairs, so we can specify up to $2^{m-1} - 1$ classes, each of size 2, without repeating any proper subsets. The resultant multiset is $\Fc = \{C_{j}^{(i)} : i \in [n], j \in \{1,2\}\}$. We claim that $\Pc$ is the only possible partition.

If there was any reassignment of the components to form a different collection of classes, we must still have exactly two components in each class. If not, then some class must have at least 3 components, so some other class must have exactly one component, which must be $[m]$. But no class had a single component of size $m$ before the reassignment because we were working only with proper subsets. For any component $C$ in class of size 2, it must be paired with its complement, which was precisely the original partition $\Pc$. So we have that $g(n,m) \geq 2n$.
\end{proof}

\begin{remark}[Regime of $n \geq 2^{m-1}-1$]
Lemma 15 above shows that $g(2^{m-1}-1,m) \geq 2^m - 2$. Let $\Fc$ be extremal multiset corresponding to $(2^{m-1}-1,m)$. Lemma 17 below implies that we can assume every class in $\Fc$ has size at least 2. If even a single class had size $3$ or more, then the subset criterion would be violated. So $g(2^{m-1}-1,m) \leq 2^m - 2$. By Remark 14, when $n \geq 2^{m-1}-1$, $g(n) = g(2^{m-1}-1,m) + n - 2^{m-1} + 1 = n + 2^{m-1} - 1$.
\end{remark}

\begin{lemma}
Suppose $n \leq 2^{m-1}-1$. Then there exists an extremal multiset $\Fc$ with a unique partition $\Pc$ such that each of the $n$ classes in $\Pc$ has size at least 2.
\end{lemma}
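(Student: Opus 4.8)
The plan is to induct on the number of size-$1$ classes of an extremal multiset, noting first that a size-$1$ class is necessarily a copy of $[m]$, since a single set partitioning $[m]$ must be $[m]$ itself. Fix an extremal multiset $\Fc$ with unique partition $\Pc = \{A_1,\ldots,A_n\}$. If no class has size $1$ we are done; otherwise say $A_1 = \{[m]\}$. I would record two structural facts. By Lemma 15 we have $|\Fc| \ge 2n$, so if some class had size $1$ while every other class had size at most $2$, then $|\Fc| \le 1 + 2(n-1) < 2n$, a contradiction; hence there is a class $A_2 = \{C_1,\ldots,C_{d_2}\}$ of size $d_2 \ge 3$. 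Moreover, the argument behind the Subset Criterion (Lemma 13) gives more than the stated inequality: the proper subsets induced by the classes are pairwise distinct across all of $\Pc$, since any coincidence yields a reassignment. I will use this distinctness below.

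The key move is a size-preserving surgery that trades the copy of $[m]$ for a genuine proper subset. Writing $P = [m]\setminus C_1$ and $R = C_1 \cup C_2$ (both proper, since $C_1$ is a proper nonempty subset and $d_2 \ge 3$), replace $A_1,A_2$ by $A_1' = \{C_1, P\}$ and $A_2' = \{R, C_3,\ldots,C_{d_2}\}$, leaving all other classes untouched; call the resulting multiset $\Fc'$ and partition $\Pc'$. A direct count gives $|A_1'| + |A_2'| = 2 + (d_2-1) = 1 + d_2 = |A_1| + |A_2|$, so $|\Fc'| = |\Fc| = g(n,m)$, while the number of size-$1$ classes drops by exactly one (both $A_1', A_2'$ have size $\ge 2$). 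Since $P$ and $R$ are each induced by $A_2$, the distinctness fact guarantees that neither already occurs as a component of another class, so $\Fc'$ introduces no forbidden repetition of a proper subset.

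It remains to verify that $\Fc'$ is uniquely resolvable, which is the crux. I would prove this by a lifting argument. Let $\mathcal{Q}$ be any resolving partition of $\Fc'$. Because $P \cap R = C_2 \neq \varnothing$, the components $P$ and $R$ cannot lie in a common class, so they lie in distinct classes $B_P \ni P$ and $B_R \ni R$. Then $B_P \setminus \{P\}$ partitions $C_1$ and $B_R \setminus \{R\}$ partitions $[m]\setminus R = C_3 \sqcup \cdots \sqcup C_{d_2}$, using only components common to $\Fc$ and $\Fc'$. Reversing the surgery—deleting $P, R$ and inserting $[m], C_2$—I form $\widehat{B} = \{[m]\}$ and $\widehat{B}' = (B_P \setminus \{P\}) \cup \{C_2\} \cup (B_R \setminus \{R\})$, which partitions $[m]$ by construction, and leave the remaining classes of $\mathcal{Q}$ alone. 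This yields a resolving partition of $\Fc$, which by uniqueness of $\Pc$ must equal $\Pc$; in particular $\widehat{B}' = A_2$. Unwinding this equality forces $B_P \setminus \{P\} = \{C_1\}$ and $B_R \setminus \{R\} = \{C_3,\ldots,C_{d_2}\}$, i.e. $B_P = A_1'$ and $B_R = A_2'$, with the untouched classes of $\mathcal{Q}$ equal to $A_3,\ldots,A_n$. Hence $\mathcal{Q} = \Pc'$.

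Thus $\Fc'$ is extremal, uniquely resolvable, and has strictly fewer size-$1$ classes than $\Fc$; iterating eliminates all of them and produces the desired extremal multiset. I expect the unique-resolvability step to be the main obstacle: the surgery is forced to introduce two new subsets, and one must rule out exotic resolutions of $\Fc'$ that regroup components across many of the untouched classes. The lifting argument is what tames this, since it offloads all of that combinatorial freedom onto the already-known uniqueness of $\Pc$ rather than analyzing $\Fc'$ directly.
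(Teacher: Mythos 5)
Your proposal is correct and follows essentially the same route as the paper: the identical size-preserving surgery (split $[m]$ into $C_1$ and its complement, merge $C_1$ and $C_2$ in the size-$\geq 3$ class), with uniqueness of the new partition reduced to uniqueness of the old one. Your ``reverse the surgery on an arbitrary resolution of $\Fc'$'' lifting argument is a cleaner, more systematic packaging of the paper's explicit component-moving argument, but it is the same underlying idea.
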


\begin{proof}
Let $\Fc_1$ be an extremal multiset with a unique partition $\Pc_1$ such that at least one class has size 1. Then we must have at least one class with size at least 3. If not, every class has size at most $2$ so the number of sets in $\Fc$ is less than the number of sets in the case when every class has size 2, a contradiction to Lemma 15.

Let $A_1$ and $A_2$ be the classes with size 1 and size $l \geq 3$ respectively. So $A_1$ has only a single component $[m]$. Let the $l$ components of $A_2$ be $C_1^{(b)},C_2^{(b)},C_3^{(b)},\ldots,C_l^{(b)}$. Then reduce the number of sets in $A_2$ by combining $C_1{(b)}$ and $C_2{(b)}$. Then we can specify $A_1$ with 2 sets by breaking $[m]$ up into the corresponding components $C_1^{(a)}$ and $\cup_{i=2}^l C_i^{(a)}$.

We claim that this new multiset $\Fc_2$ with its partition $\Pc_2$ is still uniquely resolvable. Suppose by contradiction that there exists another partition $\Pc_2'$ for $\Fc_2$. We will obtain a new partition $\Pc_1'$ for $\Fc_1$, contradicting the fact that $\Pc_1$ was unique.

Any such $\Pc_2'$ would have to involve dividing the sets of $A_1$ into different classes, because otherwise we directly have a reassignment of $\Pc_1$ as well. So we have that $\cup_{i=2}^l C_i^{(a)}$ is paired with components from some other classes: let us call the union of these components $K$.
\begin{itemize}
\item[1.] Move $C_1^{(a)}$ to where $K$ is.
\item[2.] Combine the sets $\cup_{i=2}^k C_i^{(a)}$ and $C_1^{(a)}$ and split the sets $C_1^{(b)}$ and $C_2^{(b)}$.
\item[3.] Move $K$ to wherever $C_1^{(b)}$ is.
\item[4.] Move $C_1^{(b)}$ to wherever $C_1^{(a)}$ was.
\end{itemize}

\begin{figure}[H]
\centering
\includegraphics[scale=0.4]{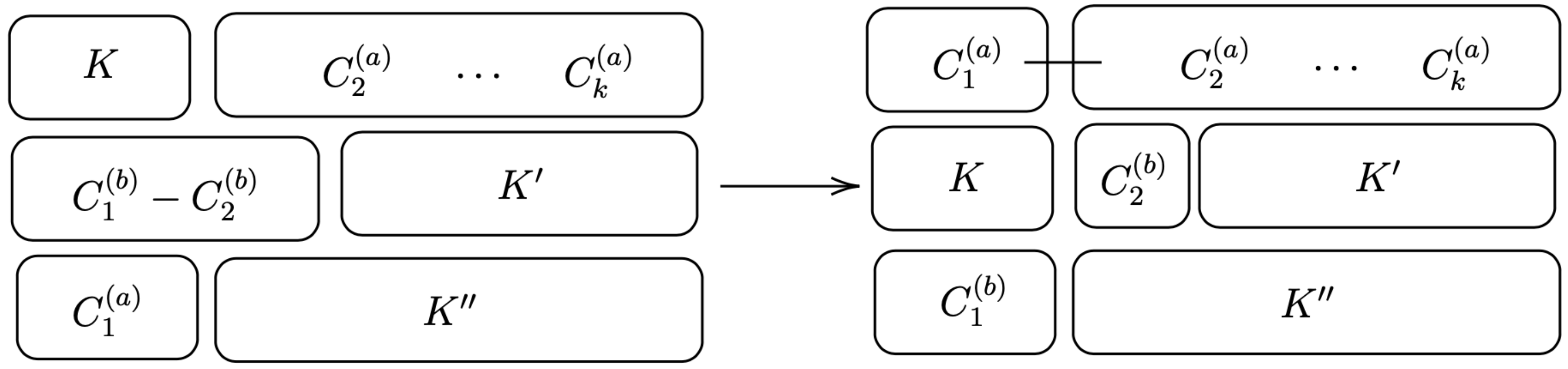}
\caption{Visual representation of the new assignment.}\label{fig4}
\end{figure}
Since $A_1$ is now unaffected and $C_1^{(b)}$, $C_2^{(b)}$ are separated, this induces a reassignment of $\Pc_1$, a contradiction. It follows that the newly formed multiset $\Fc_2$ must also be uniquely resolvable and extremal as we have not changed the number of subsets. We have now eliminated the class of size 1. We can repeat this process as many times as required. This completes the proof.
\end{proof}


\begin{remark}
Lemma 17 shows us that when $n \leq 2^{m-1}-1$, even if we include $[m]$ in an extremal multiset $\Fc$, we can eliminate it.
\end{remark}

\begin{definition}
Fix $2 \leq k \leq m$. Let $P_k(m)$ be the largest possible integer $N$ such that there exists a uniquely resolvable multiset $\Fc$ corresponding to $(N,m)$ with a partition $\Pc$ that contains $N$ classes of size $k$ each.
\end{definition}
It is easy to see that $P_k(m) \geq P_{k+1}(m)$. We just observe that for any uniquely resolvable multiset $\Fc$ corresponding to $(P_{k+1}(m),m)$ with $P_{k+1}(m)$ classes of size $k+1$, combing any two components of every class produces a uniquely resolvable multiset $\Fc'$ corresponding to $(P_{k+1}(m),m)$ with every class of size $k$.

We note that \cite{colbourn2016disjoint} studies a related problem on finding a multiset of partitions of $[m]$ into $k$ components such that no two components from different partitions are the same. While similar in spirit, considering multisets that are uniquely resolvable makes our setting quite different. 

\begin{lemma}
For any set $S$ with $N$ elements, there exist a set of $2^{N-1} - 1$ proper subsets of $S$ with each set containing at least $\lceil\frac{N}{2}\rceil$ elements such that no two sets are complements of each other.
\end{lemma}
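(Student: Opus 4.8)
The plan is to exploit the complementation involution $A \mapsto S \setminus A$ on the subsets of $S$. First I would restrict attention to the non-empty proper subsets of $S$, of which there are exactly $2^N - 2$. The empty set can be safely discarded: it has fewer than $\lceil N/2 \rceil$ elements, and moreover its complement $S$ is not proper, so it could not participate in a complementary pair of proper subsets in any case. The crucial structural observation is that complementation partitions these $2^N - 2$ subsets into $\frac{2^N - 2}{2} = 2^{N-1} - 1$ disjoint unordered pairs $\{A, S \setminus A\}$. Each such pair consists of two \emph{distinct} proper subsets, since $A = S \setminus A$ is impossible for a non-empty $A$ (the sets $A$ and $S \setminus A$ are disjoint, forcing $A = \emptyset$).

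Next I would make the selection. From each complementary pair I choose the member of larger cardinality. Because $|A| + |S \setminus A| = N$, at least one of the two members has cardinality at least $N/2$, and hence at least $\lceil N/2 \rceil$; when $N$ is even and $|A| = N/2$, both members already meet this threshold, so I may break the tie arbitrarily. Collecting these representatives yields a family of exactly $2^{N-1} - 1$ proper subsets, each of size at least $\lceil N/2 \rceil$. The non-complementarity condition then follows immediately: two distinct sets in the family are complements of each other precisely when they constitute one of the pairs $\{A, S \setminus A\}$, but the selection took exactly one representative from each such pair, so no two chosen sets can be complementary. This simultaneously secures all three required properties—the count, the cardinality lower bound, and the absence of complementary pairs.

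The construction is elementary and I do not expect a substantive obstacle; the only points demanding care are bookkeeping ones. I would verify that $\lceil N/2 \rceil$ is genuinely the correct uniform lower bound across both parities—in the odd case $N = 2t+1$ each pair splits as a set of size $\le t$ against one of size $\ge t+1 = \lceil N/2 \rceil$, while in the even case $N = 2t$ a balanced pair contributes two sets each of size $t = \lceil N/2 \rceil$—and I would confirm that the pair count $2^{N-1} - 1$ is exact rather than merely asymptotic. The arbitrary tie-breaking in the even case is harmless precisely because both members of a balanced pair already clear the cardinality threshold.
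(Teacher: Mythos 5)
Your proposal is correct and is essentially the same argument as the paper's: both select one representative from each of the $2^{N-1}-1$ complementary pairs of non-empty proper subsets, always taking the larger member and breaking ties arbitrarily when $N$ is even. Your phrasing via the complementation involution merely unifies the paper's separate odd/even cases into a single selection rule.
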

\begin{proof}
If $N$ is odd, this follows by simply counting all possible subsets of size $\lceil\frac{N}{2}\rceil$ or greater because the sum of the cardinalities of any two such sets is more than $N$. If $N$ is even, we can construct a family with all proper subsets of $S$ with size $\frac{N}{2}+1$ or larger. For the sets of size $\frac{N}{2}$, we can pair them up with their complements and add exactly one set from each pair to our family. This must also have size $2^{N-1}-1$.
\end{proof}

\begin{lemma}
Choose an integer $k$ such that $2 \leq k < m$ and $\frac{m}{k-1} \geq 2$. Then
\begin{equation}
P_k(m) \geq 2^{\lfloor\frac{m}{k-1}\rfloor-1} - 1
\end{equation}
\end{lemma}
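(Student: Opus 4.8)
The plan is to exhibit an explicit uniquely resolvable multiset whose unique partition has exactly $N := 2^{\lfloor m/(k-1)\rfloor-1}-1$ classes, each of size $k$; this is precisely what $P_k(m) \geq N$ asks for. Write $t := \lfloor \frac{m}{k-1}\rfloor$, which satisfies $t \geq 2$ by the hypothesis $\frac{m}{k-1}\geq 2$. First I would carve out $k-1$ pairwise disjoint blocks $B_1,\dots,B_{k-1}\subseteq[m]$ of size exactly $t$ (possible since $(k-1)t\leq m$), collecting the fewer than $k-1$ leftover points into a set $L$. Fixing a single abstract $t$-element set $S$ together with bijections $\psi_i:S\to B_i$, I would apply Lemma 20 to produce subsets $T_1,\dots,T_N\subseteq S$ that are pairwise non-complementary and each of size at least $\lceil\frac{t}{2}\rceil$. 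The $j$-th class is then declared to consist of one ``big'' component $U_j := L\cup\bigcup_{i=1}^{k-1}\psi_i(T_j)$ together with the $k-1$ ``small'' components $B_i\setminus\psi_i(T_j)=\psi_i(S\setminus T_j)$ for $i\in[k-1]$. Each class is a partition of $[m]$ into exactly $k$ proper non-empty parts, and the $Nk$ components listed are pairwise distinct: the big components differ in their block traces and meet every block, while distinct small components of a fixed block differ and small components of different blocks are subsets of disjoint blocks.

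The substance of the proof is verifying that this partition $\Pc$ is the only resolution, and I would do this in two stages. Stage one: no class of any valid resolution can contain two big components $U_j,U_{j'}$, because their traces $\psi_1(T_j),\psi_1(T_{j'})$ in $B_1$ each have size at least $\lceil\frac{t}{2}\rceil$ and, being distinct and non-complementary, cannot be disjoint inside a $t$-element block; hence they cannot lie in a common class. Since there are exactly $N$ big components and $N$ classes, every class contains exactly one big component. Stage two: fix a block $B_i$ and count the parts of $B_i$ used across all classes. Each class contributes its single big trace plus at least one small component to fill the nonempty residual $B_i\setminus\psi_i(T)$ (nonempty because $T\neq S$), so at least $2$ parts; meanwhile the total number of available small components of block $B_i$ is exactly $N$, giving a grand total of $2N$ parts. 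The two counts force every class to use exactly one small component in $B_i$, and distinctness of the small components forces that component to carry the same index $j$ as the class's big component. Doing this for every block shows each class is one of the original $U_j$ with its matching small parts, so $\Pc$ is forced.

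I expect the main obstacle to be exactly this uniqueness argument, and it is where both properties furnished by Lemma 20 are genuinely used: ``large'' (size $\geq\lceil\frac{t}{2}\rceil$) is what forbids two big components from sharing a class and what makes each residual small enough to require a single small component, while ``pairwise non-complementary'' closes the borderline even-$t$ case in which two half-blocks could be disjoint. The remaining checks — that the components partition $[m]$, are proper and non-empty, are pairwise distinct, and that attaching the leftover set $L$ to the big components does no harm (since $L$ lies outside every block, it never interferes with the per-block counting) — I expect to be routine.
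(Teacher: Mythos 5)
Your proposal is correct and takes essentially the same route as the paper: partition $[m]$ into $k-1$ blocks, apply Lemma 20 inside each block to build central (``big'') components together with their per-block residuals, and prove uniqueness by noting that two large non-complementary traces in a $t$-element block must intersect, then pigeonholing the $N$ small components of each block across the $N$ classes. Your two-stage write-up merely makes explicit the counting that the paper's proof leaves somewhat terse.
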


\begin{proof}
For simplicity, assume that $k-1$ divides $m$. The proof for the other cases is similar.

\begin{enumerate}
\item Divide $[m]$ into $k-1$ sets: $S_1 = \{1,\ldots,\frac{m}{k-1}\}, S_2 = \{\frac{m}{k-1} + 1,\ldots,\frac{2m}{k-1}\}, \ldots, S_{k-1} = \{(k-2)\frac{m}{k-1} + 1,\ldots,m\}$. 
\item Take $l = 2^{\frac{m}{k-1}-1} - 1$ proper subsets of each $S_i$ with size at least $\lceil \frac{m}{2(k-1)} \rceil \geq 1$ such that no two are complements of each other by by Lemma 20. For each $i \in [k-1]$, label the sets $X^{(i)}_1,\ldots,X^{(i)}_l$.
\item Construct $l$ tuples of sets of the form $(X^{(1)}_j,\ldots,X^{(k-1)}_j)$ for each $j \in [l]$. 
\item We now construct $l$ classes each of size $k$. Choose any tuple $(X^{(1)},\ldots,X^{(k-1)})$. Let $C = \cup_{i=1}^{k-1} X^{(i)}$. So $C$ contains at least one element in $S_i$ for each $1 \leq i \leq k-1$. For each $i$, let $C_i = S_i \setminus C$ be the set of remaining elements in $S_i$. Then construct a class with the sets $\{C,C_1,\ldots,C_{k-1}\}$. Call $C$ the \textit{central component} of this class. Repeat this process for each tuple to create $l = 2^{\frac{m}{k-1}-1} - 1$ classes of size $k$.
\end{enumerate}

This construction clearly induces a partition $\Pc$ for a multiset $\Fc$ corresponding to $(2^{\frac{m}{k-1}-1}-1,m)$ with every class of size $k$. We claim $\Fc$ is uniquely resolvable. 


Consider any central component $C$. Assume that originally in $\Pc$, $C$ was in the class $\{C,C_1,\ldots,C_{k-1}\}$. Since $C$ does not contain all the elements in $S_1$ by construction, it requires at least one set $C_1' \subseteq S_1$ in the same class even in $\Pc'$. It cannot have more than one subset that is a subset of $S_1$, because by the Pigeonhole principle, this would mean some other central component has no subset of $S_1$ in its class in $\Pc'$. So in fact $C$ has exactly one set $C_1' \subseteq S_1$ in its class in $\Pc'$. In fact, $C_1' = C_1$ because $C_1'$ must contain all the elements of $S_1$ not in $C$ without having an intersection with $C$, so we must have $C_1' = S_1 \setminus C = C_1$. Repeating the same argument for $i=2,\ldots,k-1$, $C$ must be in the same class in both $\Pc$ and $\Pc'$. Repeating the argument for every central component, we have that $\Pc = \Pc'$. It follows that $\Fc$ is uniquely resolvable.

Consider any central component $C$. Assume that originally in $\Pc$, $C$ was in the class $\{C,C_1,\ldots,C_{k-1}\}$. Since $C$ does not contain all the elements in $S_1$ by construction, it requires at least one set $C_1' \subseteq S_1$ in the same class even in $\Pc'$. It cannot have more than one subset that is a subset of $S_1$, because by the Pigeonhole principle, this would mean some other central component has no subset of $S_1$ in its class in $\Pc'$. So in fact $C$ has exactly one set $C_1' \subseteq S_1$ in its class in $\Pc'$. In fact, $C_1' = C_1$ because $C_1'$ must contain all the elements of $S_1$ not in $C$ without having an intersection with $C$, so we must have $C_1' = S_1 \setminus C = C_1$. Repeating the same argument for $i=2,\ldots,k-1$, $C$ must be in the same class in both $\Pc$ and $\Pc'$. Repeating the argument for every central component, we have that $\Pc = \Pc'$. It follows that $\Fc$ is uniquely resolvable.

When $k-1$ does not divide $m$, a similar construction works. If $r$ is the remainder when $m$ is divided by $k-1$, just add the same $r$ integers into every central component.
\end{proof}

\begin{lemma}
Suppose $2 \leq k < m$ and $\frac{m}{k-1} \geq 2$. Then for any $n \leq 2^{\lfloor\frac{m}{k-1}\rfloor-1} - 1$,
\begin{equation}
    g(n,m) \geq kn
\end{equation}
\end{lemma}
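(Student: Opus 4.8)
The plan is to deduce this directly from Lemma 21 together with the principle that restricting a uniquely resolvable multiset to a sub-collection of its partition classes yields another uniquely resolvable multiset. First I would note that the hypotheses $2 \leq k < m$ and $\frac{m}{k-1} \geq 2$ are exactly those of Lemma 21, so $P_k(m) \geq 2^{\lfloor\frac{m}{k-1}\rfloor - 1} - 1 \geq n$. By Definition 19 this furnishes a uniquely resolvable multiset $\Fc$ corresponding to $(P_k(m), m)$ whose unique partition $\Pc = \{A_1, \ldots, A_{P_k(m)}\}$ consists of $P_k(m)$ classes, each of size exactly $k$.

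Next I would form $\Fc'$ by keeping only the components lying in, say, the first $n$ classes $A_1, \ldots, A_n$ and discarding the rest. Each $A_i$ is a partition of $[m]$, so $\Fc'$ is a resolvable multiset corresponding to $(n,m)$ with the induced partition $\Pc' = \{A_1, \ldots, A_n\}$, and since every retained class has size $k$, we have $|\Fc'| = kn$. It then remains only to verify that $\Fc'$ is \emph{uniquely} resolvable; once this is done, $g(n,m) \geq |\Fc'| = kn$ follows immediately from the definition of $g$.

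The heart of the argument — and the step I expect to require the most care — is the claim that passing to a sub-collection of classes cannot create a new partition. I would argue by contradiction: suppose $\Fc'$ admitted a reassignment $\Pc''$, i.e.\ a partition into $n$ classes, each a partition of $[m]$, that is not a trivial renaming of $\Pc'$. Then $\Pc'' \cup \{A_{n+1}, \ldots, A_{P_k(m)}\}$ would be a partition of the full multiset $\Fc$ into $P_k(m)$ classes, each still a partition of $[m]$; since it agrees with $\Pc$ on the discarded classes but differs from $\Pc'$ on the retained part, it is a genuine reassignment of $\Pc$, contradicting the unique resolvability of $\Fc$. The only subtlety to be careful about is ensuring that the difference between $\Pc''$ and $\Pc'$ is a true reassignment rather than a relabeling, so that the extended partition is genuinely distinct from $\Pc$ and not merely a permutation of its classes; this is precisely the content of the reassignment definition. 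The non-integrality of $\frac{m}{k-1}$ causes no difficulty here, as the floor in the exponent is already absorbed into the statement of Lemma 21.
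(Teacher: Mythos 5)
Your proof is correct, and its core is the same as the paper's: both reduce the statement to Lemma 21. The only real difference is in how one passes from the $P_k(m)$-class object that Lemma 21 guarantees to an $n$-class one. The paper's one-line proof implicitly re-runs the construction of Lemma 21 using only $n$ of the available tuples (the uniqueness argument there does not depend on how many classes are built, so this is legitimate, though unstated). You instead treat Lemma 21 as a black box and prove a general restriction principle: deleting whole classes from a uniquely resolvable multiset preserves unique resolvability, because any reassignment of the retained part extends, by leaving the discarded classes untouched, to a reassignment of the original. Your handling of the one subtle point is sound: the adjoined classes $A_{n+1},\ldots,A_{P_k(m)}$ involve only component indices outside the retained sub-multiset, so the extended partition can coincide with $\Pc$ as a set of classes only if $\Pc''$ coincides with $\Pc'$, and hence a genuine reassignment downstairs lifts to a genuine reassignment upstairs. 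What your route buys is a reusable structural fact --- $P_k(m) \geq n$ alone yields an $n$-class witness, independent of the particulars of Lemma 21's construction --- at the cost of a slightly longer argument; either route is valid.
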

\begin{proof}
By Lemma 21, if $n \leq 2^{\lfloor\frac{m}{k-1}\rfloor-1} - 1$, we can construct a uniquely resolvable multiset $\Fc$ corresponding to $(n,m)$ with every class having size $k$, so $|\Fc| = kn \leq g(n,m)$.
\end{proof}

\begin{theorem}[Lower Bound]
Suppose $1 \leq n \leq 2^{m-1}-1$. Then 
\begin{equation}
g(n,m) > \frac{n(m+1)}{\log_2(n+1)+2} = \Omega\left(\frac{nm}{\log_2 n}\right)
\end{equation}
\end{theorem}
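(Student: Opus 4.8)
The plan is to feed Lemma 22 with the largest admissible class size $k$. Recall that Lemma 22 asserts $g(n,m)\ge kn$ whenever $2\le k<m$, $\frac{m}{k-1}\ge 2$, and $n\le 2^{\lfloor m/(k-1)\rfloor-1}-1$. So the entire task reduces to choosing $k$ as large as the hypotheses permit and then checking that $kn$ already exceeds the claimed quantity. Writing $L=\log_2(n+1)$, the first step is to recast the applicability constraint: the inequality $n\le 2^{\lfloor m/(k-1)\rfloor-1}-1$ is equivalent to $\lfloor m/(k-1)\rfloor\ge L+1$. Since $\lfloor m/(k-1)\rfloor\ge \frac{m-(k-2)}{k-1}$, a clean sufficient condition is $\frac{m-k+2}{k-1}\ge L+1$, which rearranges exactly into $k-1\le \frac{m+1}{L+2}$, i.e. $k\le \frac{m+1}{L+2}+1$.

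Next I would set $k=\left\lfloor \frac{m+1}{L+2}\right\rfloor+1$. By the previous step this integer satisfies the applicability constraint, so Lemma 22 yields $g(n,m)\ge kn$; and because $\lfloor x\rfloor+1>x$ we have $k>\frac{m+1}{L+2}$ strictly, whence $g(n,m)\ge kn>\frac{n(m+1)}{\log_2(n+1)+2}$, which is the stated bound. The asymptotic form $\Omega(nm/\log_2 n)$, and the specialization $n=2^{cm}\Rightarrow \Omega(n/c)$, then follow from crude estimates on the denominator $\log_2(n+1)+2$.

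The routine-but-necessary bookkeeping—and the only place any care is needed—is verifying the structural hypotheses $2\le k<m$ and $\frac{m}{k-1}\ge 2$ throughout the range $1\le n\le 2^{m-1}-1$. The lower bound $k\ge 2$ is exactly equivalent to $\frac{m+1}{L+2}\ge 1$, i.e. to $n\le 2^{m-1}-1$, so it is guaranteed precisely by the hypothesis of the theorem. Using $n\ge 1$ (hence $L\ge 1$ and $L+2\ge 3$) one obtains $k-1\le \frac{m+1}{3}\le \frac m2$, which gives simultaneously $\frac{m}{k-1}\ge 2$ and $k<m$ for all $m\ge 3$. The only genuinely excluded corner is $m\le 2$, where the sole case in range is $(n,m)=(1,2)$; there a direct inspection gives $g(1,2)=2>1=\frac{n(m+1)}{\log_2(n+1)+2}$, so the bound survives.

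I expect the main obstacle to be not any conceptual difficulty but exactly this floor-function juggling: arranging that a single integer $k$ sits below the applicability threshold $\frac{m+1}{L+2}+1$ and yet strictly above $\frac{m+1}{L+2}$, so that the conclusion comes out as a strict inequality with the precise constant $\log_2(n+1)+2$ in the denominator, while the three hypotheses of Lemma 22 all hold uniformly across the admissible range.
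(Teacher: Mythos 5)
Your proposal is correct and follows essentially the same route as the paper: invoke Lemma 22 with $k=\left\lfloor \frac{m+1}{\log_2(n+1)+2}\right\rfloor+1$, verify the admissibility constraint via $\lfloor m/(k-1)\rfloor\ge\frac{m-(k-2)}{k-1}$, and conclude $g(n,m)\ge kn>\frac{n(m+1)}{\log_2(n+1)+2}$ from $\lfloor x\rfloor+1>x$. Your explicit verification of the side hypotheses $2\le k<m$ and $\frac{m}{k-1}\ge 2$, together with the $m\le 2$ corner case, is bookkeeping the paper omits but which checks out.
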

\begin{proof}
From Lemma 22, it suffices to choose $k$ such that $n \leq 2^{\frac{m-(k-2)}{k-1}-1} - 1 = 2^{\frac{m+1}{k-1}-2} - 1$. $k = \left\lfloor \frac{m+1}{2 + \log_2(n+1)} \right\rfloor + 1$ satisfies this inequality, so 
\[g(n,m) \geq n \left(\left\lfloor \frac{m+1}{\log_2(n+1)+2} \right\rfloor + 1\right) > \frac{n(m+1)}{\log_2(n+1)+2}\]
\end{proof}

\subsection{Upper Bounds for $g(n,m)$ when $n \leq 2^{m-1} - 1$}

The natural question to ask is how close to optimal the lower bound from Section 3.2 is. We provide an upper bound in a large regime, namely, when $n = 2^{cm}$ for some $c \in (0,1)$ which matches the lower bound in Theorem 23 up to a $\log_2(\frac{1}{c})$ factor.

\begin{lemma}
\cite{flum} Let $H_2(x)$ be the binary cross entropy function.
\begin{equation}
H_2(x) = -x\log_2(x) - (1-x)\log_2(1-x)
\end{equation}
Suppose $k \geq 2$. Then
\begin{equation}
\sum_{i=0}^{\lfloor \frac{m}{k} \rfloor} {m\choose{i}} \leq 2^{m H_2(\frac{1}{k})}
\end{equation}
\end{lemma}

\begin{lemma}
For any $m$ and $3 \leq k \leq \frac{m}{2}$,
\begin{equation}
P_k(m) \leq 2^{mH_2(\frac{1}{k})}
\end{equation}
\end{lemma}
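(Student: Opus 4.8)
The plan is to build an injection from the $N=P_k(m)$ classes into the family of small subsets of $[m]$, and then bound the number of such subsets via the entropy estimate of Lemma 24. So I would start by fixing a uniquely resolvable multiset $\Fc$ that realizes $P_k(m)$, together with its unique partition $\Pc$ consisting of $N$ classes, each of which partitions $[m]$ into exactly $k$ non-empty components.

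The key structural input is that no proper subset of $[m]$ can appear as a component of two distinct classes. This is exactly where unique resolvability (not just resolvability) is used, and it can be read off the proof of the Subset Criterion (Lemma 13): each single component $C$ of a class is one of the $2^{d_i}-2$ proper subsets formed by that class, by taking the singleton sub-collection $\{C\}$, and that proof shows the families of proper subsets formed by distinct classes are pairwise disjoint. (Here every component is genuinely a proper subset, since the remaining $k-1$ components are non-empty, so each component has size at most $m-(k-1)<m$.) Hence all components are distinct as subsets across classes. Next I would apply a pigeonhole observation inside each class: since its $k$ non-empty components partition the $m$-element set $[m]$, its smallest component has size at most $\lfloor m/k\rfloor$, for otherwise the $k$ parts would total at least $k(\lfloor m/k\rfloor+1)>m$. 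For each class I then select a component of minimum size, breaking ties arbitrarily, which assigns to every class a non-empty subset of $[m]$ of cardinality at most $\lfloor m/k\rfloor$.

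This assignment is injective, since two classes mapped to the same subset $S$ would both have $S$ as a component, contradicting the cross-class distinctness above. Therefore
\[
N \;\le\; \sum_{i=1}^{\lfloor m/k\rfloor}\binom{m}{i}\;\le\;\sum_{i=0}^{\lfloor m/k\rfloor}\binom{m}{i}\;\le\;2^{mH_2(1/k)},
\]
where the final inequality is Lemma 24, applicable because $k\ge 2$ (and $k\le m/2$ keeps $\lfloor m/k\rfloor\le m/2$ so the estimate is in its intended range). This yields precisely $P_k(m)\le 2^{mH_2(1/k)}$. I expect the only delicate step to be the cross-class distinctness of components — it is the sole place uniqueness of the partition enters — while the remainder is a pigeonhole count followed by the entropy bound and should present no further difficulty.
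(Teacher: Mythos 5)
Your proposal is correct and follows essentially the same route as the paper: the paper likewise notes that each size-$k$ class must contain a component of size at most $\lfloor m/k\rfloor$, that unique resolvability forces all proper subsets in $\Fc$ to be distinct, and then applies the entropy bound of Lemma 24. Your write-up is somewhat more careful than the paper's (making the injection explicit and sourcing the cross-class distinctness to the Subset Criterion argument), but there is no substantive difference in approach.
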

\begin{proof}
Consider a uniquely resolvable multiset $\Fc$ corresponding to $(P_k(m),m)$ with partition $\Pc$ where every class has size $k$. Note that each class must have at least one subset with at most $\lfloor \frac{m}{k} \rfloor$ elements. If $\Fc$ is uniquely resolvable, the only subset it can contain multiple times is $[m]$, so each subset with size at most $\lfloor \frac{m}{k} \rfloor$ appears at most once in $\Fc$. Therefore, by Lemma 24,
$ |\Fc| \leq \sum_{i=0}^{\lfloor \frac{m}{k} \rfloor} {m \choose{i}}
\leq 2^{mH_2(\frac{1}{k})}$
\end{proof}
\begin{lemma}
Let $k \geq 2$ be an integer such that $n > 2^{m H_2(\frac{1}{k})}$. Then $g(n,m) \leq kn$.
\end{lemma}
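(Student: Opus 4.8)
The plan is to take an arbitrary uniquely resolvable multiset $\Fc$ for $(n,m)$ with unique partition $\Pc = \{A_1,\dots,A_n\}$ having class sizes $d_1,\dots,d_n$, and to show directly that $|\Fc| = \sum_{i=1}^n d_i < kn$; since $g(n,m)$ is the maximum such size, this yields $g(n,m) \le kn$. The idea is a refinement of the counting in Lemma 25: rather than extracting a single small component from each class, I extract \emph{all} of the small components of every class and relate their total number both to $\sum_i d_i$ from below and to the supply of available small subsets from above.

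Call a nonempty subset of $[m]$ \emph{small} if it has at most $\lfloor m/k \rfloor$ elements. First I would bound the number of small components per class from below. Fix a class $A_i$ of size $d_i$; its $d_i$ components partition $[m]$. A component of size exceeding $\lfloor m/k \rfloor$ has more than $m/k$ elements, so by pigeonhole at most $k-1$ of the components can be this large, since otherwise their disjoint union would have more than $k \cdot \frac{m}{k} = m$ elements. Hence $A_i$ contributes at least $d_i - (k-1)$ small components, and summing over all classes the total number of small components is at least $\sum_{i=1}^n d_i - n(k-1)$. This inequality remains safe even for the classes with $d_i \le k-1$ (including any size-$1$ class, whose only component is the non-small $[m]$), as such classes merely contribute a nonnegative count against a nonpositive term.

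Next I would bound the total number of small components from above. The key point is that all small components are \emph{distinct} across the whole multiset: within a single class the components are disjoint nonempty sets and hence distinct, while a proper subset appearing as a component in two different classes would be a proper subset induced by both classes, contradicting the uniqueness of $\Pc$ exactly as in the proof of the Subset Criterion (Lemma 13). Therefore the number of small components is at most the number of nonempty subsets of $[m]$ of size at most $\lfloor m/k \rfloor$, namely $\sum_{i=0}^{\lfloor m/k \rfloor}{m \choose{i}}$, which by Lemma 24 is at most $2^{mH_2(1/k)}$.

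Combining the two bounds gives
\[\sum_{i=1}^n d_i - n(k-1) \;\le\; 2^{mH_2(1/k)} \;<\; n,\]
where the last strict inequality is precisely the hypothesis $n > 2^{mH_2(1/k)}$. Rearranging yields $|\Fc| = \sum_{i=1}^n d_i < nk$, and taking the maximum over all uniquely resolvable multisets for $(n,m)$ gives $g(n,m) \le kn$. The only delicate steps are the per-class pigeonhole bound of at most $k-1$ large components and the cross-class distinctness of small components; both are routine once the Subset Criterion is invoked, so I do not anticipate a serious obstacle beyond bookkeeping.
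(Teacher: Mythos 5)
Your proposal is correct and follows essentially the same approach as the paper: both arguments show each class of size $d_i$ contains at least $d_i-(k-1)$ components of size at most $\lfloor m/k\rfloor$, observe that these small components must be pairwise distinct, and compare against the bound $\sum_{i=0}^{\lfloor m/k\rfloor}\binom{m}{i}\leq 2^{mH_2(1/k)}$. The only difference is cosmetic — you argue directly while the paper argues by contradiction using $r_i=\max(0,d_i-(k-1))$.
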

\begin{proof}
Suppose by contradiction that $g(n,m) > kn$. Consider an extremal multiset $\Fc$ corresponding to $(n,m)$ with partition $\Pc$ and class sizes $d_1,\ldots,d_n$. For all $i \in [n]$, let $r_i = \max(0,d_i - (k-1))$.
Note that $\sum_{i=1}^n r_i \geq n$ because otherwise $g(n,m) \leq kn$. Consider any class with positive $r_i$. So we know $d_i = k + r_i - 1$. We claim that this class must contain at least $r_i$ subsets with size at most $\lfloor \frac{m}{k} \rfloor$.
If not, this class must contain at most $r_i-1$ subsets with size at most $\lfloor \frac{m}{k} \rfloor$, so at least $k$ subsets must have size at least $\lfloor \frac{m}{k} \rfloor + 1$. It follows that the sum of the cardinalities of the sets in the class is at least
\[k\left(\left\lfloor \frac{m}{k} \right\rfloor + 1\right) \geq k\frac{m-k+1}{k} + k = m+1\]
This is a contradiction. Therefore, each class must contain at least $r_i$ subsets with size at most $\lfloor \frac{m}{k} \rfloor$. Since a uniquely resolvable multiset must contain at most $2^{m H_2(\frac{1}{k})}$ subsets with size at most $\lfloor \frac{m}{k} \rfloor$ by Lemma 25, it follows that $n \leq \sum_{i=1}^n r_i \leq 2^{m H_2(\frac{1}{k})}$, a contradiction.
\end{proof}

\begin{theorem}[Upper Bound]
Let $c \in (0,1)$ and $n = 2^{cm}$ with $1 \leq n \leq 2^{m-1} - 1$. Then
\begin{equation}
g(n,m) \leq \frac{n}{c} \left(6 - 3.2\log_2(c)\right)
\end{equation}
\end{theorem}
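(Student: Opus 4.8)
The plan is to reduce everything to Lemma 26 and then to the problem of choosing one good integer $k$. Since $n = 2^{cm}$, the hypothesis $n > 2^{mH_2(1/k)}$ of Lemma 26 is equivalent to $H_2(1/k) < c$, and its conclusion $g(n,m) \le kn$ delivers the theorem as soon as we can also guarantee $k \le \tfrac{1}{c}\!\left(6 - 3.2\log_2 c\right)$. So the entire proof comes down to exhibiting an integer $k \ge 2$ that simultaneously satisfies $H_2(1/k) < c$ and $k \le \tfrac{1}{c}\!\left(6 - 3.2\log_2 c\right)$.

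To handle the entropy condition I would first replace $H_2$ by an elementary upper bound. From the strict inequality $-\ln(1-x) < \tfrac{x}{1-x}$ on $(0,1)$ one gets $(1-x)\log_2\tfrac{1}{1-x} < x\log_2 e$, hence $H_2(x) < x\log_2(e/x)$ and in particular $H_2(1/k) < \tfrac{\log_2(ek)}{k}$. It therefore suffices to find an integer $k$ with $f(k) \le c$, where $f(k) = \tfrac{\log_2(ek)}{k}$. This $f$ is strictly decreasing for $k>1$ (its derivative is $\tfrac{-\ln k}{k^2\ln 2}$), and $f(2) > 1 > c$, so there is a unique real $\xi > 2$ with $f(\xi) = c$ and $f(k) \le c \iff k \ge \xi$. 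The chosen $k$ will be $\lceil \xi\rceil$, which is automatically an integer $\ge 3 \ge 2$, so it meets the hypotheses of Lemma 26; what remains is to control its size.

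The crux is comparing $\xi$ with $x := \tfrac{1}{c}\!\left(6 - 3.2\log_2 c\right)$. Writing $u = \log_2(1/c) \ge 0$ and $t = cx = 6 + 3.2u$, a direct computation gives
\[
cx - \log_2(ex) \;=\; 6 + 2.2u - \log_2 e - \log_2(6+3.2u) \;=:\; g(u).
\]
The heart of the argument is the elementary inequality $g(u) > 1$ for all $u \ge 0$: at $u=0$ it reads $6 - \log_2 e - \log_2 6 \approx 1.97 > 1$, and $g'(u) = 2.2 - \tfrac{3.2}{(6+3.2u)\ln 2} > 0$, so $g$ is increasing. In particular $f(x) \le c$, giving $\xi \le x$; and subtracting the defining relation $\log_2(e\xi) = c\xi$ from $cx = \log_2(ex) + g(u)$ yields $c(x-\xi) = \log_2(x/\xi) + g(u) \ge g(u) > 1$, so $x - \xi > 1/c > 1$. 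Hence $k = \lceil\xi\rceil \le \xi + 1 < x$, so this integer satisfies both $f(k) \le c$ (whence $H_2(1/k) < c$) and $k \le x$.

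Applying Lemma 26 to this $k$ then gives $g(n,m) \le kn \le xn = \tfrac{n}{c}\!\left(6 - 3.2\log_2 c\right)$, completing the proof. The main obstacle is the uniform verification of $g(u) > 1$ across the entire range $u \in [0,\infty)$ (equivalently $c \in (0,1)$): the constants $6$ and $3.2$ must be taken large enough that the inequality survives as $c \to 1$ (where $u \to 0$ and the convenient large-$k$ asymptotics of $H_2$ fail), yet the resulting slack must still exceed $1$ so that rounding $\xi$ up to the integer $k$ cannot violate $k \le x$. A secondary routine point is to confirm the bound $\xi > 2$, which is what guarantees $k$ is a legitimate integer $\ge 2$ in Lemma 26.
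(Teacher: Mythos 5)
Your argument is correct and follows essentially the same route as the paper: reduce to Lemma 26, replace $H_2(1/k)$ by an elementary upper bound, and verify via a one-variable monotonicity/calculus inequality that a suitable integer $k$ fits under $\frac{1}{c}\left(6 - 3.2\log_2 c\right)$. The only real differences are that you use the sharper bound $H_2(1/k) < \frac{\log_2(ek)}{k}$ in place of the paper's $\frac{2\log_2 k}{k}$, and you take $k = \lceil \xi \rceil$ for an implicitly defined threshold $\xi$ (showing the target exceeds $\xi + 1$) rather than the paper's explicit choice $k = \left\lceil \frac{1.6}{\epsilon}\ln\left(\frac{1}{\epsilon}\right)\right\rceil$ with $\epsilon = \frac{c\ln 2}{2}$; all your computations, including $g(u) > 1$ on $u \geq 0$, check out.
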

\begin{proof}
We will make use of Lemma 26 and find $k$ such that $n = 2^{cm} > 2^{m H_2(\frac{1}{k})}$. Note that if $k \geq 3$,
\[H_2\left(\frac{1}{k}\right) < \frac{2\log_2(k)}{k}\]
So it suffices to find an integer $k \geq 3$ such that $\frac{2\log_2(k)}{k} \leq c$, or equivalently $\frac{\ln(k)}{k} \leq \frac{c\ln2}{2}$. Let $\epsilon = \frac{c\ln2}{2} \in (0,\frac{\ln2}{2})$. We claim that $k = \lceil \frac{1.6}{\epsilon} \ln\left(\frac{1}{\epsilon}\right) \rceil$ suffices. We note that $k \geq 3$ because
$\frac{1.6}{\epsilon} \ln\left(\frac{1}{\epsilon}\right) \geq  \frac{3.2}{\ln2} \ln\left(\frac{2}{\ln2}\right) \geq 4$. We want to show that $\frac{\ln k}{k} \leq \epsilon$. Since $\frac{\ln x}{x}$ is decreasing for $x \geq 3$, it suffices to show that
\begin{align*}
\frac{\ln\left(\frac{1.6}{\epsilon} \ln\left(\frac{1}{\epsilon}\right)\right)}{\frac{1.6}{\epsilon} \ln\left(\frac{1}{\epsilon}\right)} \leq \epsilon
&\iff 0.6\ln\left(\frac{1}{\epsilon}\right) - \ln\left(\ln\left(\frac{1}{\epsilon}\right)\right) - \ln(1.6) \geq 0
\end{align*}
Define $h(x) = 0.6\ln(x) - \ln(\ln(x)) - \ln(1.6)$. It suffices to show $h(x)$ is positive on $(0,\infty)$. The only critical point of $h(x)$ is at $x = e^{\frac{5}{3}}$. It is easy to verify that $h(e^{\frac{5}{3}}) > 0$ and $h''(e^{\frac{5}{3}}) > 0$. So $h(x)$ is always positive. It follows that
\begin{align*}
g(n,m)
&\leq n \left\lceil \frac{1.6}{\epsilon} \ln\left(\frac{1}{\epsilon}\right) \right\rceil\\
&= n \left\lceil \frac{3.2}{c\ln2} \ln\left(\frac{2}{c\ln2}\right) \right\rceil\\
&\leq \frac{n}{c} \left(6 - 3.2\log_2(c)\right)
\end{align*}
\end{proof}

\begin{remark}
This upper bound of $\frac{n}{c} (6-3.2\log_2(c)) = O(\frac{n}{c}\log_2(\frac{1}{c}))$ is in the regime $n = 2^{cm}$ for any $c \in (0,1)$. If we look at our lower bound from Theorem 23 in this regime, since $\log_2(n) = cm$, we have that $g(n,m) \geq \Omega(\frac{nm}{\log_2(n)}) = \Omega(\frac{n}{c})$. Therefore, our bounds are tight up to a $\log_2(\frac{1}{c})$ factor.
\end{remark}

\subsection{Determining $g(n,m)$ when $2^{m-1} - O(2^{\frac{m}{2}}) \leq n \leq 2^{m-1}-1$}

Building on ideas from the previous sections, we can in fact obtain some exact results for $g(n,m)$ when $n$ is close to $2^{m-1}-1$.

\begin{theorem}
Suppose $m \geq 4$, and $2^{m-1}+1-2^{\lfloor\frac{m}{2}\rfloor} \leq n \leq 2^{m-1} - 1$. Then
\begin{equation}
g(n,m) = 2n + \left\lfloor \frac{2^{m-1}-1-n}{2} \right\rfloor
\end{equation}
\end{theorem}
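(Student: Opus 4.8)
The plan is to prove matching upper and lower bounds, with the upper bound following quickly from the Subset Criterion and the lower bound requiring an explicit construction whose unique resolvability is the crux. Throughout write $N = 2^{m-1}-1$, so the hypothesis reads $N - n \leq 2^{\lfloor m/2\rfloor}-2$, and the claimed value is $2n + \lfloor \frac{N-n}{2}\rfloor$.

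For the upper bound, let $\Fc$ be an extremal multiset with class sizes $d_1,\dots,d_n$. By Lemma 17 I may assume every $d_i \geq 2$. The elementary inequality $2^d - 2 \geq 4d - 6$ holds for all integers $d \geq 2$ (with equality exactly when $d \in \{2,3\}$), so summing gives $\sum_i (2^{d_i}-2) \geq 4\sum_i d_i - 6n$. Combining this with the Subset Criterion (Lemma 13), $\sum_i(2^{d_i}-2) \leq 2^m - 2 = 2N$, yields $\sum_i d_i \leq 2n + \frac{N-n}{2}$; since $|\Fc| = \sum_i d_i$ is an integer this gives $g(n,m) \leq 2n + \lfloor \frac{N-n}{2} \rfloor$, and the parity of $N-n$ is absorbed into the floor. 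The equality case of the convexity step already signals that an optimal configuration should use only classes of sizes $2$ and $3$.

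For the lower bound I will construct a uniquely resolvable multiset of exactly this size. Set $t = \lfloor \frac{N-n}{2} \rfloor$; the constraint $n \geq 2^{m-1}+1-2^{\lfloor m/2\rfloor}$ is precisely what forces $t \leq 2^{\lfloor m/2\rfloor - 1} - 1$, which by Lemma 21 (with $k=3$, valid since $m \geq 4$) is the number of size-$3$ classes one can realize uniquely resolvably. So I take the $t$ size-$3$ classes produced by Lemma 21; these use $3t$ distinct complementary pairs of subsets, where each size-$3$ component lies in $\Fc$ while its complement (a pairwise union) does not. Since $n + 2t \leq N$, at least $n-t$ complementary pairs remain unused, and I add $n-t$ of them as size-$2$ classes $\{A, [m]\setminus A\}$, choosing pairs disjoint from those touched by the size-$3$ classes. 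The resulting multiset has $2(n-t) + 3t = 2n+t$ subsets, matching the bound.

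The main obstacle is verifying that this combined multiset is uniquely resolvable: the size-$3$ part is unique by Lemma 21, but adjoining the size-$2$ classes could a priori create reassignments. The key structural fact, guaranteed by the disjointness of the chosen pairs, is that a subset of $\Fc$ has its complement also in $\Fc$ if and only if it came from a size-$2$ class. Given any valid partition $\Pc'$, every class has size $\geq 2$ (no part equals $[m]$), and a class of size $2$ must be a complementary pair, hence uses only such "paired" subsets, of which there are exactly $2(n-t)$. Double counting the $2n+t$ subsets against the $n$ classes then pins $\Pc'$ to have exactly $n-t$ classes of size $2$ and $t$ of size $3$: if $a$ is the number of size-$2$ classes, then $2a \leq 2(n-t)$ gives $a \leq n-t$, while $2n+t-2a \geq 3(n-a)$ gives $a \geq n-t$. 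Consequently the size-$2$ classes must consume all paired subsets, each forced to pair with its unique complement, while the size-$3$ classes consume exactly the unpaired subsets and are pinned down by Lemma 21. I expect the only delicate points to be confirming that no complement of a size-$3$ component accidentally lands back in $\Fc$ (which follows from the uniqueness of the Lemma 21 construction, forcing its $6t$ induced subsets to be distinct) and handling the case $k-1 \nmid m$ in Lemma 21, but the counting argument itself should be robust.
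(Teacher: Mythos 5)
Your proposal is correct, and it follows the paper's overall skeleton --- upper bound from Lemma 17 plus the Subset Criterion, lower bound from $t=\lfloor\frac{N-n}{2}\rfloor$ size-$3$ classes supplied by Lemma 21 together with $n-t$ untouched complementary pairs --- but two of your steps are genuinely different and arguably cleaner. For the upper bound, the paper sets up an integer program (maximize $\sum d_i$ subject to $\sum 2^{d_i}$ being bounded) and argues via convexity exchanges that the optimum puts all the large classes at size $3$; you instead use the linear minorant $2^d-2\geq 4d-6$ for integers $d\geq 2$ (tight exactly at $d\in\{2,3\}$), which turns the Subset Criterion directly into $\sum d_i\leq 2n+\frac{N-n}{2}$ in one line and even explains why the extremal profile uses only sizes $2$ and $3$. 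For the uniqueness of the construction, the paper first rules out classes of size $\geq 4$ by a somewhat ad hoc complement-chasing argument and then treats the remaining cases; you instead isolate the structural fact that a member of $\Fc$ has its complement in $\Fc$ iff it came from a size-$2$ class, and then a two-sided double count ($2a\leq 2(n-t)$ and $2n+t-2a\geq 3(n-a)$) forces the class-size profile of any reassignment, after which the size-$2$ classes are determined by complementation and the size-$3$ classes by Lemma 21's uniqueness. The one point you flag as delicate --- that no complement of a size-$3$ component lands in $\Fc$ --- is indeed the same point the paper must argue, and your justification (the $6t$ induced subsets are pairwise distinct by the Subset Criterion, so a pairwise union cannot coincide with a component, and the size-$2$ pairs were chosen disjoint from all of them) is sound. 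Both proofs are valid; yours trades the paper's optimization and casework for two short counting arguments.
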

\begin{proof}

Pick a maximum multiset $\Fc$ with partition $\Pc$. By Lemma 17, without loss in generality, each of the $n$ classes in $\Pc$ has size at least 2. Suppose $k'$ classes have at least size 3. Let $k = \lfloor\frac{2^{m-1}-1-n}{2}\rfloor$. By the subset criterion, $k'(2^3-2) + (n-k')(2^2-2) \leq 2^m - 2$, so $k' \leq k$. We will argue that in fact choosing $k' = k$ size 3 classes and $n-k$ size 2 classes produces an extremal multiset. Note that by Lemma 21, $P_3(m) \geq 2^{\lfloor\frac{m}{2}\rfloor-1}-1$, so our condition on $n$ ensures that $k \leq P_3(m)$. 

$\Longrightarrow$ We first show that $g(n,m) \geq 2n+k$. Pick $k$ size 3 classes that are uniquely resolvable. This can be done because $k \leq P_3(m)$. Each size 3 class induces 6 proper subsets that form complementary pairs. Thus the remaining proper subsets form $\frac{2^m-2-6k}{2} \geq n - k$ complementary pairs and so the remaining $n-k$ classes can be specified with size 2. We claim this construction $(\Fc,\Pc)$ is uniquely resolvable. Suppose not. Then there exists some reassignment $\Pc'$. 

Suppose $\Pc'$ contains a class of size 4 or more. Since the total number of subsets in the multiset must be preserved, we must have a component $C \in \Fc$ that was originally in a size 3 class $\{C,C_1,C_2\}$ in $\Pc$ which is now in a size 2 class in $\Pc'$ as $\{C,C^c\}$. (We cannot have size 1 classes because $[m] \not\in \Fc$.) This means $C^c \in \Fc$. $C^c = C_1 \cup C_2$ could not have appeared as a single component in any of the original $k$ size 3 classes as we could swap it with $C_1 \cup C_2$. Further, we only introduced $n-k$ subset pairs which did not contain the proper subsets of the first $k$ classes. So $C^c \not\in \Fc$, a contradiction. Therefore, $\Pc'$ must also only have classes of size $2$ or $3$.

Pick a component $C$ that was earlier in a size 3 class. If it is now in a size 2 class, then $C^c$ must exist as a single component. By the same argument as above, $C^c$ cannot exist. So $C$ must be in a size 3 class again. So $\Pc'$ has at least $k$ size 3 classes, and cannot have more than $k$ such classes because the number of subsets must be preserved. In fact, the assignment of the components to the $k$ size 3 classes must be the same because the original $k$ classes was a unique partition. The remaining $n-k$ classes must all be of size 2. No reassignment of just size 2 classes is possible because every class must be of the form $\{C,C^c\}$. Therefore, $\Pc$ is unique and $\Fc$ is uniquely resolvable. So $g(n,m) \geq |\Fc| = 3k + 2(n-k) = 2n+k$.

$\Longleftarrow$ We show that $g(n,m) \leq 2n + k$. Consider a maximum multiset $\Fc$ with partition $\Pc$ corresponding to $(n,m)$. By Lemma 17, we can assume that each of the $n$ classes has size $d_i \geq 2$ with at most $k$ classes of size 3 or more, so at least $n-k$ classes have size exactly 2. Without loss in generality, assume the last $n-k$ classes have size $2$. By the subset criterion, $\sum_{i=1}^n (2^{d_i}-2) = 2(n-k) + \sum_{i=1}^k (2^{d_i} - 2) \leq 2^m - 2$. Let us maximize
\begin{equation}
\sum_{i=1}^{k} d_i
\end{equation}
subject to the constraints that
\begin{equation}
d_i \geq 2, d_i \in \ZZ \text{ for all } i \in [k]
\end{equation}
\begin{equation}
\sum_{i=1}^k 2^{d_i} \leq 2^m - 2 - 2n + 4k
\end{equation}

Setting $d_i = 3$ for $i \in [k]$ is a feasible solution, which results in $\sum_{i=1}^k d_i = 3k$. Consider any optimal solution. Suppose $d_{i'} = 2$ for some $i' \in [k]$. If $d_i \leq 3$ for all $i \in [k]$, then our objective is less than $3k$, a contradiction. Since $f(d) = 2^d$ is convex and strictly increasing, $f(d'+1) + f(d''-1) < f(d') + f(d'')$ for any $d' \leq d''-2$. If any $d_{i''} > 3$, we can replace $d_{i'},d_{i''}$ with $d_{i'}+1,d_{i''}-1$. So we may assume $d_i \geq 3$ for $i \in [k]$. If in fact $d_{i'} > 3$ for some $i'$, by our choice of $k$, the constraint in Equation 14 is violated. So $g(n,m) \leq \sum_{i=1}^n d_i \leq 3k + 2(n-k) = 2n + k$.

\end{proof}

\begin{remark}
By pairing sets more carefully for the central components in Lemma 21's construction when $k=3$, we can in fact show $P_3(m) \geq 2^{\lfloor \frac{m}{2} \rfloor} - 2$ for $m \geq 6$. Applying this to Theorem 29, we can slightly improve this result and show that when $m \geq 6$, $g(n,m) = 2n+\lfloor\frac{2^{m-1}-1-n}{2}\rfloor$ whenever $2^{m-1}+3-2^{\lfloor \frac{m}{2} \rfloor+1} \leq n \leq 2^{m-1}-1$. We omit this modification for brevity.
\end{remark}

\section{Discussion}
In the regime $n \leq 2^{m-1}$, we provide a lower bound for $g(n,m)$ of the form $\Omega(\frac{nm}{\log_2 n})$. When $n = 2^{cm}$ for any $c \in (0,1)$, this lower bound simplifies to $\Omega(\frac{n}{c})$, and we provide a matching upper bound up to a $\log_2(\frac{1}{c})$ factor, thus showing that our bounds are near optimal for most values of $n$. We also compute some exact values when $n \geq 2^{m-1} - O(2^{\frac{m}{2}})$. It would be interesting to see sharp bounds for $g(n,m)$ when $n$ is a polynomial in $m$. Additionally, we focused on finding the largest possible size of a non-uniform 1-balanced block design on $[m]$ with $\lambda = n$ that is uniquely resolvable. A natural generalization is to find bounds on non-uniform $t$-balanced block designs. 



\section*{Acknowledgements}
The author would like to thank Hunter Novak Spink and Ryan Alweiss for their insightful comments and corrections, as well as Nikil Roashan Selvam, Bruce Rothschild, Adam Moreno and Rajeshwari Jadhav for helpful discussions and encouragement. The author is also grateful to the anonymous reviewer for their detailed comments and suggestions, especially regarding Theorems 12 and 29.

\bibliographystyle{unsrt}
\bibliographystyle{abbrv}










\end{document}